\numberwithin{equation}{section}
\newtheorem{lem}{Lemma}[section]
\newtheorem*{thm*}{Main Theorem}
\theoremstyle{remark}
\newtheorem{rmk}{Remark}[section]
\renewcommand{\tilde}{\widetilde}
\renewcommand{\hat}{\widehat}
\newcommand{\f}{\frac}
\newcommand{\p}{\partial}
\newcommand{\beq}{\begin{equation}}
\newcommand{\eeq}{\end{equation}}	
\newcommand{\nn}{\nonumber}
\newcommand{\R}{{\mathbb R}}
\newcommand{\del}{\partial}
\newcommand{\Denote}{\stackrel{\Delta}{=}}
\newcommand{\da}{ \, {\rm d} a}
\newcommand{\dm}{ \, {\rm d} m}
\newcommand{\dt}{ \, {\rm d} t}
\newcommand{\dx}{ \, {\rm d} x}
\newcommand{\dz}{ \, {\rm d} z}
\newcommand{\dxi}{ \, {\rm d} \xi}
\newcommand{\dv}{ \, {\rm d} v}
\newcommand{\dBt}{ \, {\rm d} B_t}
\newcommand{\Eps}{\epsilon}
\newcommand{\Ni}{\noindent}
\newcommand{\BigO}{{\mathcal{O}}}
\newcommand{\CalD}{{\mathcal{D}}}
\newcommand{\CalL}{{\mathcal{L}}}
\newcommand{\VecL}{{\CalL}}
\newcommand{\Span} {{\rm Span} \, }
\newcommand{\NullL} {{\rm Null} \, \VecL}
\newcommand{\RR}{\mathbb{R}}
\newcommand{\VV}{\mathbb{V}}
\newcommand{\abs}[1]{\left\lvert#1\right\rvert}
\newcommand{\vint}[1]{\left\langle#1\right\rangle}
\newcommand{\vpran}[1]{\left(#1\right)}
\begin{document}

\title{Mathematical Modeling and Analysis of Fractional Diffusion Induced by Intracellular Noise} 
%
\author{Weiran Sun, Min Tang and Xiaoru Xue
}
\address{Department of Mathematics, Simon Fraser University, 8888 University Dr., Burnaby, BC V5A 1S6, Canada.
\newline
This author is partially supported by NSERC Discovery Grant No. R611626.}

\address{Institute of natural sciences and department of mathematics, 
Shanghai Jiao Tong University, Shanghai, 200240, China. 
\newline
This author is partially supported by NSFC 11871340
and 91330203.}
\date{\today}
\maketitle

\begin{abstract}

In this paper we use an individual-based model and its associated kinetic equation to study the generation of  long jumps in the motion of E. coli. These models relate the run-and-tumble process to the intracellular reaction where the intrinsic noise plays a central role.
Compared with the previous work in \cite{PST} in which the parametric assumptions are mainly for mathematical convenience and not well-suited for either numerical simulation or comparison with experimental results, our current paper make use of biologically meaningful pathways and tumbling kernels. Moreover, using the individual-based model we can now perform numerical simulations. 
Power-law decay of the run length, which corresponds to L\'{e}vy-type motions, are observed in our numerical results. The particular decay rate agrees quantitatively with the analytical result. We also rigorously recover the fractional diffusion equation as the limit of the kinetic model. 

 
\end{abstract}
\bigskip

\noindent{\makebox[1in]\hrulefill}\newline
2010 \textit{Mathematics Subject Classification.} 35B25; 35R11; 82C40; 92C17
\newline\textit{Keywords and phrases.}  kinetic equations,  chemotaxis, asymptotic analysis,run and tumble, biochemical pathway, fractional Laplacian, L{\'e}vy walk.

\section*{Introduction}
E.coli is known to move by alternating forward-moving "runs" and reorienting "tumbles"\cite{BB}. The switching between "runs" and "tumbles" is controlled by the rotational direction of the flagella on the cell surface of E.coli. 
Recently, biologists have uncovered the mechanism used by the biochemical pathways to regulate the flagellar motors. Since then models relating the intra-cellular molecular content with the tumbling frequency have been established \cite{JOT,STY,SWOT,X}, which we briefly explain below.

The response of the bacteria to external signal changes consists of two steps: first via ``excitation", which is a rapid change in the tumbling frequency when sensing an attractant or repellent in the environment, and then by a slow "adaption" which allows the cell to subtract the background signal and turn the tumbling frequency back to some base value. Both excitation and adaptation processes are controlled by the so-called receptor activity.  Denoted by $a$, the receptor activity depends on the intracellular methylation level $m$ and the extracellular ligand concentration $[L]$ in the way that
\beq\label{eq:adef}
a=\Big(1+\exp\Big(N(-\alpha(m-m_0)+f_0([L]))\Big)\Big)^{-1},
\qquad
f_0([L])=\ln\Big(\frac{1+[L]/K_I}{1+[L]/K_A}\Big).
\eeq
Here $N$, $\alpha$, $m_0$, $K_I$, $K_A$ are measurable constants with their biological meanings explained in \cite{SWOT}. 
The tumbling frequency $\Lambda$ for E.coli depends $a(m,[L])$ through the relation
\beq\label{eq:Lambdaa}
\Lambda(a)=\lambda_0+\tau^{-1}(a/a_0)^H,
\eeq  
where the parameters $\lambda_0$, $H$, $\tau$, $a_0$ represent respectively the rotational diffusion, the Hill coefficient of a flagellar motor's response curve, the average run time and the receptor's preferred activity. Combination of ~\eqref{eq:adef} and ~\eqref{eq:Lambdaa} gives a way to quantify the dependence of the tumbling frequency of E. Coli on the intracellular content and the exterior chemical concentration.
Intracellular adaptation dynamics of E. Coli can also be described by 
\beq\label{eq:fadef}
\frac{\dm}{\dt}=f(a)=k_R(1-a/a_0) \,,
\eeq
where $k_R$ is the adaptation time. 
In general, the specific forms of $f(\cdot)$ and $\Lambda(\cdot)$ may change depending on the types of bacteria~\cite{JOT,OXX} and 
 the frequency $\Lambda$ usually has a steep transition in $a$.


In this paper, we are interested in the population dynamics of E.coli. The particular model we use is the following bacterial run-and-tumble kinetic equation with biochemical pathway proposed in \cite{SWOT}:
\beq\label{eq:model-no-noise}
\del_t p
+ v \cdot \nabla_x p
+ \del_m \vpran{f(a)p}
= \Lambda(a) (\vint{p} - p) \,.
\eeq
Here $p(t,x,v,m)$ denotes the density function of the bacteria at time $t$, position $x\in\mathbb{R}^d$, methylation level $m \in \R$ and velocity $v\in\VV$, where $\VV$ denotes the sphere $\p B(0, v_0) \subseteq \R^d$. The velocity average $\vint{p}$ is defined by
\begin{align*}
    \vint{p}(t,x,m) = \int_\VV p (t,x,v,m) \dv \, ,
\end{align*}
where $\dv$ is the normalized surface measure such that $\int_\VV 1 \dv = 1$. The right-hand side of \eqref{eq:model-no-noise} describes the velocity jump process.

Many macroscopic models have been recovered from ~\eqref{eq:model-no-noise}. For example, in the regime where the  gradient of the ligand concentration $[L]$ is small, the classical Keller-Segel equations are derived in \cite{ErbanOthmer04, ErbanOthmer05, STY, X}. When the chemical gradient $[L]$ is large, by comparing the stiffness of response and the adaptation time, flux-limited Keller-Segel models are derived \cite{PSTY,ST}, which give an explanation of the phenomena that the drift velocity on the population level should be bounded. The diffusion terms in these models suggest that in these parameter regimes, 
the underlying microscopic dynamics of the bacteria follow a Brownian motion. 

The above theoretical results can be compared with experiments, since nowadays biologists are able to track the trajectories of each individual cell. By recording the run lengths between two successive tumbles, one can find the path length distribution of the run duration. For a Brownian motion such distribution should have a fast decay at long distances. In \cite{Harris, ARBPHB}, however, it is found that the path length distributions of some bacteria or cells actually obey a slow power-law decay. This suggests that instead of the Brownian motion, some bacteria adopt L{\'e}vy-flight type movement and have a non-negligible probability of making long jumps. 
In the case of E.coli,  it is shown in \cite{KEVSC,TG} that by adding molecular noise to the signally pathway of the bacterium, one can also observe power-law switching in bacterial flagellar motors. Furthermore, the model in \cite{Matt09} suggests that fluctuation in CheR (a protein which regulates the receptor activity) can lead to a heavy-tailed distribution of run duration.

 In order to explain the aforementioned experimental and theoretical observations, in \cite{PST} limiting fractional diffusion equations are derived by adding noise into the pathway-based kinetic model~\eqref{eq:model-no-noise}. Although this work confirms that strong noise and slow adaptation can induce L{\'e}vy-flight type movement, it cannot be compared to the experimental observations in a qualitatively way. The main reason is that some assumptions in \cite{PST} are solely for mathematical convenience which can hardly be biologically relevant. In addition, they make the numerical simulation difficult if not impossible. For example, the adaptation function $f(\cdot)$ and the tumbling frequency $\Lambda(\cdot)$ are chosen for the purpose of analysis instead of observing their biological origin as in \eqref{eq:fadef} and \eqref{eq:Lambdaa}. Moreover, the diffusion coefficient (or the noise) in~\cite{PST} will tend to infinity in the region important for the generation of fractional diffusion.

Our main contribution of the current paper is to overcome the drawbacks described above. In particular, we start from an individual-based model (IBM), which incorporates a description of intracellular signaling, with the noise and the adaptation both bounded. We perform numerical simulations using the IBM and investigate population level behavior in different parameter regimes. The mesoscopic model associated with the IBM is a non-classical kinetic equation, from which we rigorously derive a limiting fractional diffusion equation (using a similar method as in~\cite{PST}). Parameters of the kinetic model are fully determined by those of the IBM. In addition, we will apply the particular formulas for the adaptation function and the tumbling frequency in~\eqref{eq:fadef} and~\eqref{eq:Lambdaa} to make possible of experimental verifications.


For completeness, in this paper we also include the rigorous justification of the fractional diffusion limit from the kinetic equation. The argument follows a similar line as in~\cite{PST}. It is now well-known that fractional diffusion limits can be derived from kinetic models. For a more extensive review we refer the read to~\cite{PST}.
Here we make one remark regarding kinetic equations with extended variables, that is, with variables in addition to the classical $(t, x, v)$. There has been several works showing fractional diffusion limits of extended kinetic equations~\cite{FS, EGP2018,EGP2019}. 
The models considered in \cite{FS, EGP2018,EGP2019} explicitly involve path length distributions or resting time distributions with power-law decay, while in our model we do not have these pre-set distributions. Instead we start with the biological signally pathway of bacteria. Thus our analysis can be viewed as an explanation of the origin of the power-law decay distributions. Such connection is made explicit in Section~\ref{sec:numerics} where we numerically show a comparison between the decay rate of the underlying path-length distribution and the power of fractional diffusion.

The paper is organized as follows. In Section~\ref{sec:model} we set up both the individual-base model (IBM) and the associated kinetic equation. Section~\ref{sec:numerics} is devoted to the numerical simulation of the IBM in different parameter regimes. Under proper scaling and conditions on the tumbling frequency as well as the form of noise, L\'{e}vy-type motions are observed numerically at the population level. In Section~\ref{sec:theory}, we prove the main theorem regarding the derivation of the fractional diffusion equation from the kinetic equation. The numerical results of IBM and theoretical derived fractional power are consistent.


\section{Models} \label{sec:model}
In this section we introduce two mathematical models: the individual-based model (IBM) and kinetic PDE model. Parameters in the kinetic model are fully determined by those in the IBM.
\subsection{Individual-based model} 
In the IBM, each cell is described as a particle with position $x^i$, velocity $v^i$ and activity $a^i$. The superscript $i$ is the index for the cell. By the definition of $a$ in \eqref{eq:adef}, we have
\beq\label{eq:papm}
\frac{\p a}{\p m}=N\alpha a(1-a).
\eeq
When there is no noise, the time evolution of $a^i$ is modeled by rewriting \eqref{eq:fadef} into an ODE for the activity ~$a$:
$$
\frac{\da}{\dt}=F(a)=k_R\Big(N\alpha a(1-a)\Big)(1-a/a_0).
$$
With noise, the activity $a^i$ can be modelled by a stochastic differential equation (SDE), which writes
\beq\label{eq:SDE-a}
	\da = F(a)\dt +\Sigma(a)\dBt,
\eeq
where $\{ B_t\}$ denotes a Wiener process, $F(a)$ is the adaptation and $\Sigma(a)$ is the strength of the noise.

The average tumbling time for E.coli cells is about 10 times shorter than their average running time. Thus we ignore the tumbling time at each turning and assume that the rate for a running bacterium going through the process of stopping, choosing a new direction and running again is $\Lambda$. We further assume that the new direction chosen by the bacterium is random with uniform distribution. The tumbling rate $\Lambda$ depends on the activity. For the $i$-th cell with activity $a^i$, the tumbling rate is given by
\beq\label{eq:Lambdaai}
\Lambda(a^i)=\tau^{-1}(a^i/a_0)^H.
\eeq
 Compared with \eqref{eq:Lambdaa}, the rotational diffusion $\lambda_0$ has been ignored in \eqref{eq:Lambdaai}. This is because we trace the bacteria trajectories and record their actual run lengths instead of the Euclidean distance between two successive tumbles. 
Analytically, the degeneracy of $\Lambda$ when $a^{i}$ vanishes is the key to generate long jumps. 
  
  
\begin{rmk}
 Another way of adding noise to the signally pathway is to use the methylation level $m$:
\beq\label{eq:SDE-m}
\dm=f(a)\dt+\sigma(a)\dBt
\eeq  
When $m$ is determined, the activity $a$ can be updated by the relation between $a$ and $m$ in \eqref{eq:adef}. Both formulations are reasonable when the ligand concentration $[L]$ is uniform in space, but when $[L]$ depends on space or time, the activity $a$ can no longer be considered as solely depending on $m$. Thus biophysicists prefer the intracellular pathway model in \eqref{eq:SDE-a}. Moreover, an important advantage of choosing the activity $a$ as the internal variable is that the boundedness of the internal variable $a$ (and the strength of the noises) make possible of numerical simulations for the model equation \eqref{eq:SDE-a}.
 \end{rmk}

\subsection{The PDE model}
The kinetic model associated with~\eqref{eq:SDE-a} describes the time evolution of the probability density function of the bacteria at time $t$, position $x\in\mathbb{R}^d$, velocity $v\in\VV$ 
and activity $a\in [0,1]$. 
By It\^{o}'s formula, the Fokker-Planck operator associated with~\eqref{eq:SDE-a} is 
\beq\label{eq:SDE-FPE}
\CalL q=-\partial_a(F(a)q)+\frac{1}{2}\partial_{aa}\big(\Sigma^2(a)q\big).
\eeq 
The null space of $\CalL$ is given by 
\begin{align*}
    \NullL = \Span \{Q_0\},
\end{align*}
where $Q_0$ satisfies
\begin{align} \label{eq:Q-0-1}
FQ_0=\frac{1}{2}\partial_a\big(\Sigma^2Q_0\big)
\end{align}
and can be solved explicitly as
\beq\label{eq:Q0a}
Q_0(a)=\frac{1}{c_0}\exp\Big(\int_{a_0}^a \frac{F\big(a'\big)-2\Sigma\big(a'\big)\partial_a\Sigma(a')}{\Sigma^2(a')}\da'\Big).
\eeq
Here $c_0$ is the normalization factor to make $\int_0^1Q_0\da=1$. 
Using such $Q_0$ one can rewrite $\CalL$ as
\begin{align}\label{eq:pa}
\CalL q
=\partial_a\Big(\frac{1}{2}\Sigma^2 Q_0\frac{-Fq+\frac{1}{2}\partial_a(\Sigma^2 q) }{\frac{1}{2}\Sigma^2Q_0}\Big)
=\partial_a\Big(\frac{1}{2}\Sigma^2Q_0\partial_a\big(\frac{q}{Q_0}\big)\Big).
\end{align}
Denote the diffusion coefficient as $D$ such that
\beq
\label{eq:DSigma} 
D(a)=\frac{1}{2}\Sigma^2(a).
\eeq 
Then the kinetic PDE model we consider has the form
\begin{align} \label{eq:kinetic-a}
   \del_t q
+  v \cdot  \nabla_x q
-  \del_a &\vpran{D(a) Q_0(a) \del_a\frac {q}{Q_0} }
= \Lambda(a) (\vint{q} - q) \,,
\end{align} 
Function $Q_0(a)$ can be viewed as the equilibrium distribution in $a$ in the absence of any external signal. 
The individual-based model can be considered as a Monte Carlo particle simulation for the kinetic PDE model \eqref{eq:kinetic-a}.  We can recover $F, D$ in the individual-based model from~\eqref{eq:Q-0-1} and~\eqref{eq:DSigma} by
\begin{align*}
F(a)=D(a)\frac{\del_aQ_0}{Q_0}+\del_a D,\qquad \Sigma(a)=\sqrt{2D(a)}.
\end{align*}


\begin{rmk} We can also write equation \eqref{eq:model-no-noise} in terms of $t$, $x$,  $v$ and $a \in [0, 1]$. Let $\tilde p(t,x,v,a)=p(t,x,v,m)$. We can deduce that 
$$
\del_t \tilde p
+ v \cdot \nabla_x \tilde p
+ N\alpha a(1-a)\del_a \vpran{f(a)\tilde p}
= \Lambda(a) (\vint{\tilde p} - \tilde p) \,.
$$
Compared with~\eqref{eq:kinetic-a}, we have $q(t,x,v,a)=\frac{\tilde p}{N\alpha a(1-a)}$.
\end{rmk}

\section{Numerical Results of the IBM}\label{sec:numerics}

\subsection{Scalings}
We nondimensionalize \eqref{eq:kinetic-a} by letting
\begin{align*}
   t = T_t \, \tilde{t},
\qquad 
   x = L \, \tilde{x},
\qquad 
   v_0 = V_0 \, \tilde{v}_0,
\qquad
   D(a)=\frac{\tilde D(a)}{T_a},
\qquad
   \Lambda = \frac{\tilde{ \Lambda}}{T_\lambda},
\end{align*}
where $T_t$, $L$ and $V_0$ are respectively the characteristic temporal, spatial and velocity scales of the system. The parameters $T_a$ and $T_\lambda$ are the characteristic adaptation time and running time between two successive tumbles. 
The nondimensionalized equation is (after dropping ``$\sim$"): 
\begin{align} \label{eq:nondimensional-kinetic-a}
   \frac{T_\lambda}{T_t} \del_t q_\Eps
+\frac{V_0 T_\lambda}{L}  v \cdot  \nabla_x q_\Eps
- \frac{T_\lambda}{T_a} \del_a &\vpran{D(a) Q_0(a) \del_a\frac {q_\Eps}{Q_0} }
= \Lambda(a) (\vint{q_\Eps} - q_\Eps) \,,
\end{align}
We consider the scaling such that 
\beq\label{eq:scales}
    \frac{T_\lambda}{T_t}=\Eps^{1+\mu},
\qquad
   \frac{V_0 T_\lambda}{L}=\Eps,\qquad \frac{T_\lambda}{T_a}=\Eps^s,
\eeq
where $\mu\in[0,1]$ and $s<1+\mu$ will be determined later. Different $s$ are tested numerically and the magnitude of $s$ gives the time scale of the intracellular signal dynamics.

\subsection{Simulations of the IBM}
We use the particle method in one space dimension to verify that in certain parameter regimes, one can observe a L{\'e}vy-flight type movement instead of the Brownian motion on the population level. 
  
\medskip
\Ni {\textbf{Numerical scheme.}}
The computational domain is $[-25mm,25mm]$ and we track the trajectory of $10000$ cells. Each cell is represented by its position $x^i$, velocity $v^i$ and activity $a^i$. The initial $x^i$ for all cells are $0$, their initial velocities $v^i$ are randomly set to be $v_0$ or $-v_0$ with equal probability  and the initial $a^i$ for the $10^4$ particles are randomly distributed according to $Q_0(a)$. Let $\Delta t$ be the time step. At each step we evolve $(x^i,a^i,v^i)$ ($i=1,\cdots,10^4$) by the following calculations: 

\begin{itemize}
\item[1)] Update $v^i$ according to the tumbling frequency $\Lambda(a^i)$.
For each $i$, generate one random number $r^i$ uniformly distributed in $[0,1]$. If $r^i\leq\frac{1}{2} \Lambda(a^i)\Delta t$, then set the cell velocity to $-v^i$.
\item[2)] Update the position $x^i$.
Set the new position to be $x^i+v^i\Delta t$ with $v^i$ being the current cell velocity.

\item[3)] Update the internal state $a^i$.
We update $a^i$ according to the SDE in \eqref{eq:SDE-a} by the widely-used Milstein Scheme \cite{}. The new $a^i$ is set to be
\begin{align*}
a^i+F(a^i)\Delta t+\Sigma(a^i)\Delta B_t+\frac{1}{2}\Sigma(a^i)\partial_a\Sigma(a^i)\big((\Delta B_t)^2-\Delta t\big),
\end{align*}
where $\Delta B_t$ is a random number according to the normal distribution $N(0,\Delta t)$.
\end{itemize}

\Ni{\textbf{Results.}}
In the physics literature, the run length distribution and the relation between the time and mean square displacement (MSD) are used to determine whether a L{\'e}vy-flight type movement occurs. The MSD is defined by:
\begin{align*}
\text{MSD} := \vint{(\text{x}-\text{x}_0)^2}=\frac{1}{N}\sum\limits_{i=1}^{N}(x^i(t)-x^i(0))^2,
\end{align*} 
where $N$ is the number of particles, $x^i(0)$ is the initial position of the $i$th particle and $x^i(t)$ is the position of the $i$th particle at time $t$.  We run simulations with $10000$ cells and record all run lengths between two successive velocity switching events as well as their MSD. 
The MSD satisfying $\text{MSD}\sim t^{\frac{2}{1+\mu}}$ corresponds to a L{\'e}vy-walk with the fractional power $1+\mu$, while for a Brownian motions one should have $\text{MSD}\sim t$.


Most of the parameters we choose are from the wild time E.coli. In the definition of $a$ and $\Lambda(a)$, 
we take $$N=6,\qquad \alpha=1.7,\qquad a_0=1/2,\qquad \tau=1 \, \text{sec},\qquad \Lambda(a)=(a/a_0)^\beta$$ as in \cite{SWOT}. By tuning the adaption time as well as the noise, we can observe different population level behaviour. More specifically, we choose
\begin{align*}
F(a)=\frac{n+\theta}{T_a}\big(a(1-a)\big)^{n-1}(1-2a),\qquad
\Sigma(a)=\sqrt{\frac{2}{T_a}}\big(a(1-a)\big)^{n/2},\qquad
\Lambda(a)=(2a)^\beta.
\end{align*}
From \eqref{eq:Q0a} and \eqref{eq:DSigma}, the corresponding $Q_0(a)$ and $D(a)$ are
\begin{align*}
Q_0(a)=\big(a(1-a)\big)^{\theta},\qquad
D(a)=\frac{1}{T_a}\big(a(1-a)\big)^{n}.
\end{align*}
Throughout the computation, we fix
\begin{align*}
v_0 = 0.02 \, \text{mm/sec},
\qquad L=1 \, \text{mm}, 
\qquad T_\lambda=1 \, \text{sec}.
\end{align*}
This gives 
\begin{align*}
    \Eps=0.02, \qquad \Eps^s=1/T_a.
\end{align*}
Thus different $s$ corresponds to different adaptation time $T_a$. 
According to the theoretical results in Section 2.3, long jumps happen when $s\in(1+\frac{\theta+1}{\beta},1+\frac{2-n}{\beta})$, . Then if we choose 
\begin{align*}
\theta=-0.5,\qquad n=1.1,\qquad \beta=2,
\end{align*}
then $s\in(1.25,1.45)$ and $T_a\in(132.96 \,\text{sec}, \, 290.74 \, \text{sec})$. The other important parameter is the characteristic system time $T_t$. From the main theorem in Section 2.3, the system time should be $\Eps^{-1-\frac{2-n}{\beta}}$ which is approximately $300 \, \text{sec}$.
In what follows, we test three different sets of parameters:
\begin{itemize}
\item[I]$\theta=-0.5$, $n=1.1$, $\beta=2$, $T_a=200s$, $T_t=300s$;
\item[II]$\theta=-0.5$, $n=1.1$, $\beta=2$, $T_a=10s$, $T_t=300s$;
\item[III]
$\theta=0.5$, $n=1.1$, $\beta=2$, $T_a=200s$, $T_t=300s$.
\end{itemize}
Among the three only Case I satisfies the conditions for long jumps. Case II has a fast adaptation and the equilibrium $Q_0$ in Case III does not satisfy the constraints in~\eqref{assump:main}. We expect the classical diffusion occurs in Case II and possibly in Case III as well. 

The time step we use is $\Delta t=0.1$ sec. The simulation is run up to $T_t$. We record all path lengths $\{l_j\}$ between two successive velocity switching events for all $10000$ particles. Since the cell velocity is $0.02$ mm/sec and $\Delta t=0.1$ sec, the smallest possible run length is $0.002$ mm. Let $\Delta x=0.002$ and 
$$
x_i=(i+0.5)\Delta x,\qquad\mbox{for $i\in\{0,1,\cdots,[10T_t]\}$}.
$$
We count the number of $l_j$'s that fall into the interval $(x_i-\frac{\Delta x}{2},x_i+\frac{\Delta x}{2}]$ and denote them by $y_i$. In Figure~ \ref{fig:loglog}, we plot the collection of $(\log(l_i),\log(y_i))$ and $(l_i,\log(y_i))$. According to \cite{FS}, the decay rate of the path length distribution (PLD) plays an important role in determine whether a random process is Brownian motion or L{\'e}vy-flight type movement.  More specifically, when the PLD decays as $s^{-\alpha}$ with $\alpha\in(2,3)$, the cells exhibit L{\'e}vy-flight type movement at the population level. When the PLD decays as $s^{-\alpha}$ with $\alpha\geq 3$ or $\exp(-\alpha s)$ for arbitrary positive $\alpha$, the cells follow a Brownian motion. Therefore, we investigate that if $(\log(l_i),log(y_i))$ or $(l_i,\log(y_i))$ can be fitted by a linear function when $y_i$ is small. However, 
 as we can see from the data, many $l_i$ share the same $y_i$, especially when $y_i$ is small. Therefore, we average all $l_i$ with the same $y_i$ and then use the least square method to find the straight lines that fit best in the tail part where $y_i\in[1,1000]$.

The path lengths, their frequency and the fitted straight lines are presented in Figure \ref{fig:loglog}. We can see that the PLD does obey a power law decay in the expected range in Case I and an exponential decay in Case II. 
However, in Case III, although we give one possible fit, the data is actually too noisy for deciding into which range its decay rate falls.

The situation is more clear in Figure~\ref{fig:MSD}, where the MSD at different time for different cases are plotted. Figure \ref{fig:MSD} confirms that Case I corresponds to long jumps and Case II to Brownian motions. The slop in Case III suggests that it is a normal diffusion. We further note that the slope in Figure~\ref{fig:MSD} for Case I corresponds to a fractional power of $2/1.382 \approx 1.447$, while the theoretical result gives $1+\mu = 1 + \frac{2-1.1}{2} = 1.45$.

Another observation we make is that if we run the simulation for Case I for a longer time until $T_t=5000 \, \text{sec}$, then the slope change from $1.382$ to $1.0423$. This is consistent with some experimental observations that fractional diffusions can evolve into the normal diffusion when the time is long enough \cite{Hongliang}. 
\begin{figure}[htbp!]\centering
\caption{The run lengths of $10000$ cells. The top figures are the log-log plots of different cases. The bottom one are the log plots of different cases.}
\includegraphics[scale=0.75]{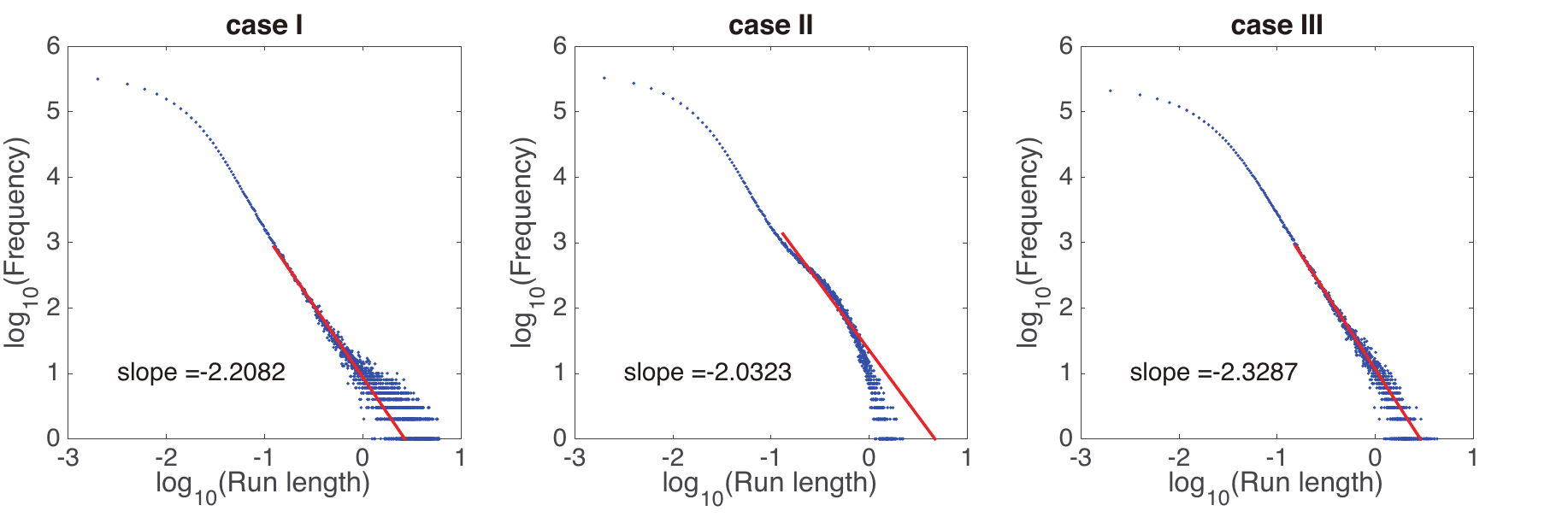} 
\\
\hspace{0.15cm} \includegraphics[scale=0.75]{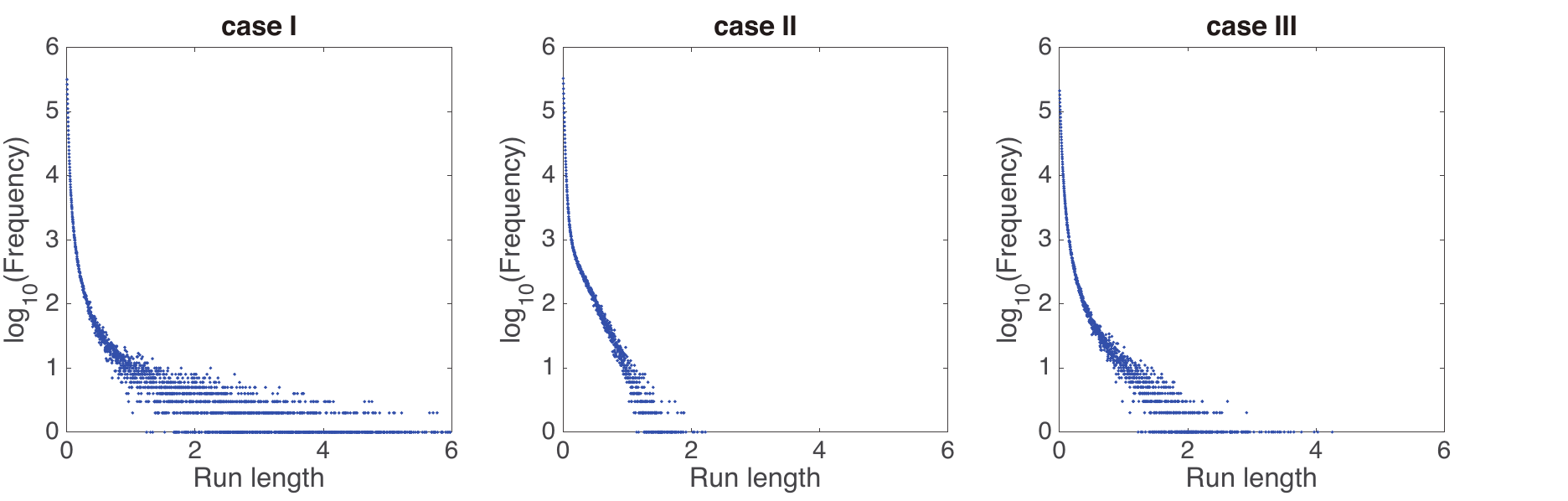}
\label{fig:loglog}
\end{figure}

\begin{figure}[htbp!]\centering
\caption{The mean square displacement of $10000$ cells for different time. The slope of the MSD in Case I corresponds to a fractional power of $1.447$, which is in quantitative agreement with the theoretical result of $1.45$.} 

\medskip
\includegraphics[scale=0.55]{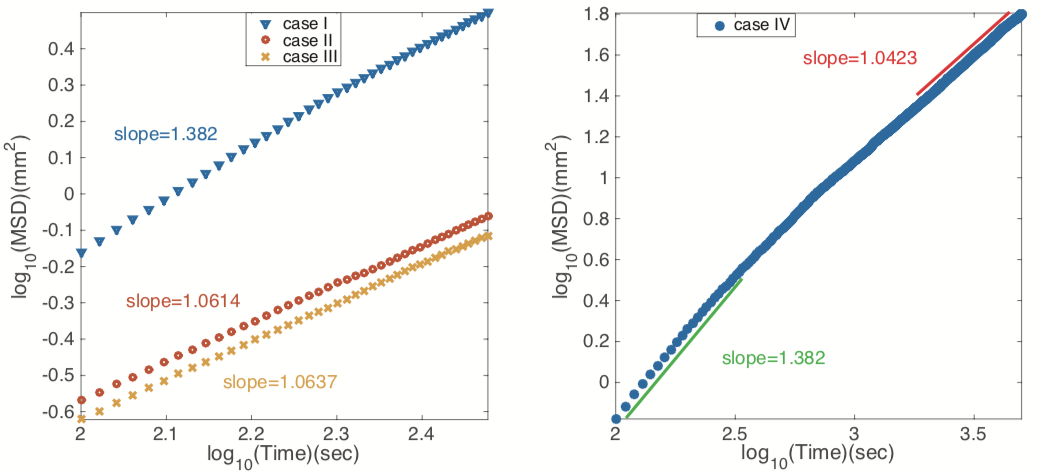}
\label{fig:MSD}
\end{figure}


\section{Theoretical results based on the PDE model}\label{sec:theory}
The main theoretical result of this paper is to rigorously derive fractional diffusion equations (which correspond to L{\'e}vy processes) from the following scaled equation 
\begin{align} \label{eq:scaled-kinetic-a}
   \Eps^{1+\mu} \del_t q_\Eps
+ \Eps v \cdot  \nabla_x q_\Eps
- \Eps^s \del_a &\vpran{D(a) Q_0(a) \del_a\frac {q_\Eps}{Q_0} }
= \Lambda(a) (\vint{q_\Eps} - q_\Eps) \,,
\end{align}
where $0 < \mu < 1$ and $0< s < 1+ \mu$. 

\smallskip

\Ni {\bf Assumptions on the coefficients. }Let $a_1\in(0,1/2)$ and $a_2\in(1/2,1)$ be two constants.
\begin{itemize}
\item
The equilibrium state $Q_0(a)$ satisfies 
\begin{align} \label{def:q0}
     Q_0(a) 
     =  \begin{cases}
           c'_q a^{\theta'}\,, & a > a_2 \,, \\[2pt]
           \BigO(1) \,, & a \in [a_1, a_2] \,, \\[2pt]
          c_q a^{\theta} \,, & a < a_1 \,,
         \end{cases}
        \qquad    Q_0(a) >0, \qquad   \int_0^1 Q_0\,da=1 .
\end{align}
\item The tumbling frequency $\Lambda(a)$ has the structure that $\Lambda \in C^1[0, a]$ and
\begin{align} \label{def:Lambda}
   \Lambda(a) 
   = \begin{cases} 
        \Lambda_0(a)  \,, & a \geq a_1 \,, \\[2pt]
        c_\lambda a^\beta \,, & a \leq a_1 \,,
      \end{cases} 
\qquad
   \Lambda_0(a) \geq \lambda_0 > 0 \,,
\end{align}
where $\lambda_0$ is a constant. 
The mechanism at work here is the degeneracy of  the tumbling rate $\Lambda(a)$ near $a = 0$.
\item
The diffusion coefficient $D(a)$ is a smooth bounded functions on $[0,1]$ such that
\begin{align} \label{def:D}
D(a) 
     =  \begin{cases}
          c'_d a^{n'} \,, & a > a_2 \,, \\[2pt]
          \BigO(1) \,, & a \in [a_1, a_2] \,,  \\[2pt]
          c_d a^{n} \,, & a < a_1 \,,
         \end{cases}
\end{align}
\end{itemize}
The parameters $\theta$, $\theta'$, $c_q$, $c_q'$, $\beta$, $c_\lambda$, $n$, $n'$, $c_d$, $c_d'$ are all positive constants. \smallskip

 \Ni {\bf Assumptions on the initial data.} We assume that
 \begin{align*}
q (0, x, v, a)  = q^{in}(x, v, a) := \rho^0(x) Q_0(a) \geq 0 \,.
\end{align*}
and there exists a constant $B$ such that
\begin{align}\label{ID}
q^{in} \leq B Q_0, 
\qquad      
   \int_{\R^d} \int_0^1 \int_\VV \frac{(q^{in})^2}{Q_0} ( x, v, a) \dv \da \dx  \leq B, 
\qquad   
   \int_{\R^d} \int_0^1 \int_\VV q^{in} ( x, v, a) \dv \da \dx \leq B \,.
\end{align}
The main theorem that we prove in this paper states 
 \begin{thm*} 
Let $q_\Eps$ be the solution of~\eqref{eq:scaled-kinetic-a}. Suppose assumptions \eqref{def:q0}--\eqref{ID} hold and the parameters $\theta,\theta', \beta, n, n', \mu$ satisfy 
\begin{align}\label{assump:main}
   -1<\theta<1 - n < \beta - 1 \,,
\qquad 
   1+\frac{\theta+1}{\beta}<s<1+\mu \,,\
\qquad 
   -1 < \theta' < 1 - n'.
\end{align}
Moreover, suppose that
\begin{equation} \label{value_mu}
\mu = \frac{2-n}{\beta} \in (0, 1) \,.
\end{equation} 
Then we have
\begin{equation}
q_\Eps (t,x,v,a)  \to \rho(t,x) Q_0(a)
\qquad
 \text{as $\Eps \to 0$}
\label{limi_gen}
\end{equation} 
in the sense that $\frac{q_\Eps}{Q_0}$ converges  $L^\infty-w*$ to  $\rho \in L^\infty(\R^+; L^1\cap L^\infty(\R^d))$ and $\rho$ solves
\begin{equation} 
\left\{ \begin{array}{l}
 \del_t  \rho(t,x) + \nu  \vpran{-\Delta}^{\frac{1+\mu}{2}}\rho=0,
\\ 
 \rho(0,x) = 
 \rho^0(x) \,.
\end{array}\right. 
\label{eq:fractional}
\end{equation}
The diffusion coefficient is given by $\nu = B_0/\nu_0$ where $B_0, \nu_0$ are defined in ~\eqref{def:B-0} and~\eqref{def:nu-0} respectively.
\label{thm:main}
\end{thm*}

The main theorem shows that, within a certain parameter regime, the population level behaviour can adopt a L{\'e}vy-flight type movement if there is noise in the internal signally pathway. This phenomenon appears if the tumbling frequency $\Lambda $ has degeneracy. If instead $\Lambda$ has a strictly positive lower bound, then a classical diffusion will occur. In proving the fractional diffusion limit in~\eqref{eq:fractional}, we use the same techniques as in \cite{PST} with a particular attention paid to the singularity at $a=0$.



We will use the notations
\begin{align} \label{def:rho-R-Eps}
  \rho_\Eps(t,x)= \int_0^1 \int_\VV q_\Eps(t,x,a) \dv\da, 
\qquad
   R_\Eps (t, x, v) = \int_0^1 q_\Eps(t,x,a)\da.
\end{align}
\subsection{Useful bounds}\label{subsec:apriori} Similar as in~\cite{PST}, we first derive several useful bounds and establish some technical lemmas.

\subsubsection{Relative Entropy Estimates} Since equation~\eqref{eq:scaled-kinetic-a} has a similar structure with equation (0.2) in~\cite{PST}, we have the following relative entropy estimate:
\begin{lem} \label{lem:entropy}
Supppse $q_\Eps$ is a solution to equation~\eqref{eq:scaled-kinetic-a}. Suppose the initial data $q^{in}$ satisfies that 
\begin{align*}
   \int_{\R^d} \int_{\R} \int_\VV q^{in} \dv\da\dx = 1
\quad \text{and} \quad
    0 \leq q^{in} \leq B Q_0
\quad
   \text{for some constant $B > 0$.}
\end{align*}
Then $q_\Eps$ satisfies that $ 0\leq q_\Eps \leq B Q_0$ and
\begin{align}\label{L2-infty}
     0\leq q_\Eps \leq B Q_0 \,,
\qquad
  \int_{\R^d} \int_0^1 \int_\VV
     \frac{q_\Eps^2}{Q_0} (t, x, v, a) \dv \da \dx \leq B,  
\end{align}
and
\begin{align} \label{bound:orthogonal}
  \int_0^\infty \int_{\R^d} \int_0^1 \int_\VV
     D(a) Q_0(a) \vpran{\del_a \vpran{\frac{q_\Eps}{Q_0}}}^2
\leq B \Eps^{1+\mu-s} \,,
\qquad
   \int_0^\infty \int_{\R^d} \int_0^1 \int_\VV 
      \Lambda(a) \frac{\vpran{q_\Eps - \vint{q_\Eps}}^2}{Q_0}
\leq 
  B  \Eps^{1+\mu}  \,.
\end{align}
\end{lem}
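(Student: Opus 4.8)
The plan is to exploit two structural features of equation~\eqref{eq:scaled-kinetic-a}: first, that $Q_0$ (being independent of $t$, $x$, $v$) is an \emph{exact} stationary solution of the full operator, which delivers the pointwise bounds by comparison; and second, that testing the equation against $q_\Eps/Q_0$ produces a clean dissipation identity in which the transport term drops and the two remaining terms are precisely the dissipation quantities appearing in~\eqref{bound:orthogonal}.

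First I would establish the maximum principle $0 \le q_\Eps \le B Q_0$. The lower bound holds because the collision operator $\Lambda(\vint{q_\Eps}-q_\Eps)$ is of gain--loss type and preserves positivity, the $x$-transport moves mass along characteristics, and the $a$-operator is in divergence form; hence nonnegative initial data stays nonnegative. For the upper bound I observe that $B Q_0$ solves~\eqref{eq:scaled-kinetic-a} identically: $\del_t(BQ_0)=0$, $v\cdot\nabla_x(BQ_0)=0$, and $\vint{BQ_0}-BQ_0=0$ since $Q_0$ depends only on $a$, while $\del_a\vpran{D Q_0 \del_a(BQ_0/Q_0)}=\del_a\vpran{D Q_0 \del_a B}=0$. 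Thus $w=q_\Eps-BQ_0$ solves the same linear equation with $w(0)\le 0$, and the comparison principle gives $w\le 0$ for all time.

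Next, set $h=q_\Eps/Q_0$ and test~\eqref{eq:scaled-kinetic-a} against $h$, integrating in $(x,v,a)\in\R^d\times\VV\times[0,1]$. The time term gives $\tfrac{\Eps^{1+\mu}}{2}\tfrac{\rd}{\rd t}\int q_\Eps^2/Q_0$. Using $q_\Eps=hQ_0$ and that $Q_0$ is $x$-independent, the transport term becomes $\tfrac{\Eps}{2}\int Q_0\, v\cdot\nabla_x(h^2)$, which vanishes after integrating by parts in $x$ (using the decay inherited from integrable data). Integrating the $a$-operator by parts yields $+\Eps^s\int D Q_0\vpran{\del_a h}^2$, provided the boundary flux $D Q_0\, h\,\del_a h$ vanishes at $a=0,1$. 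Finally, writing $\vint{q_\Eps}=Q_0\vint{h}$, the collision term equals $-\int \Lambda Q_0\vpran{\vint{h^2}-\vint{h}^2}=-\int \Lambda\, \vpran{q_\Eps-\vint{q_\Eps}}^2/Q_0$. Collecting the contributions gives the entropy identity
\begin{align}
  \frac{\Eps^{1+\mu}}{2}\frac{\rd}{\rd t}\int \frac{q_\Eps^2}{Q_0}
+ \Eps^s \int D Q_0 \vpran{\del_a h}^2
+ \int \Lambda \frac{\vpran{q_\Eps-\vint{q_\Eps}}^2}{Q_0}
= 0,
\end{align}
where each $\int$ abbreviates integration over $\R^d\times\VV\times[0,1]$. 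Both dissipation integrals are nonnegative, so $H(t):=\int q_\Eps^2/Q_0$ is nonincreasing; since $q^{in}\le BQ_0$ gives $H(0)=\int \tfrac{q^{in}}{Q_0}\,q^{in}\le B\int q^{in}=B$, we obtain the $L^2$ bound in~\eqref{L2-infty}. Integrating the identity over $t\in(0,\infty)$ and discarding $H(\infty)\ge 0$ bounds each dissipation term by $\tfrac{B}{2}\Eps^{1+\mu}$; dividing the first by $\Eps^s$ yields exactly the two estimates in~\eqref{bound:orthogonal}.

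The main obstacle is rigor rather than the formal algebra: the $a$-diffusion degenerates at the endpoints, since $D Q_0\sim a^{n+\theta}$ as $a\to 0$, so one must justify both the comparison principle and the vanishing of the boundary flux $D Q_0\, h\,\del_a h$ at $a=0,1$ for a degenerate parabolic--hyperbolic equation. I would handle this through a vanishing-viscosity/regularization scheme (adding $\eta\,\del_{aa}$ and $\eta\,\Delta_x$, deriving the estimates uniformly in $\eta$, and passing to the limit), using the exponent constraints in~\eqref{assump:main}---in particular $-1<\theta$ together with the relation between $\theta$ and $n$---to guarantee that the weighted quantities are integrable and the endpoint terms drop. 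Since~\eqref{eq:scaled-kinetic-a} shares the structure of equation~(0.2) in~\cite{PST}, these justifications follow the same route established there.
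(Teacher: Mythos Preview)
Your argument is correct and is exactly the standard relative-entropy computation the paper has in mind: the paper does not write out its own proof but states that it is identical to that of Lemma~2.1 in~\cite{PST}, which proceeds by the same comparison with $BQ_0$ and the same test against $q_\Eps/Q_0$ to obtain the dissipation identity you derived. Your remarks on the degenerate boundary behavior and the regularization route also match how~\cite{PST} justifies these formal steps.
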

The details of the proof of Lemma~\ref{lem:entropy} are omitted since they are the same as the proof of Lemma 2.1 in~\cite{PST} (with the variable $y$ in~\cite{PST} changed into $a$ here). 

The first and immediate consequence of Lemma~\ref{lem:entropy} is the weak convergence of $q_\Eps$:
\begin{lem} \label{lem:wcv} 
Along a subsequence still denoted by $q_\Eps$, we have
$$
 \f{q_\Eps }{Q_0} (t, x, v, a) \to \rho(t,x) , \qquad \text{in } L^\infty(\R^+ \times \R^d \times \R \times \VV)-w^\ast \, ,
$$
where $\rho(t,x)\in   L^\infty (\R^+ ; L^1\cap L^\infty (\R^d))$. 
\end{lem}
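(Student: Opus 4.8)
The plan is to derive this as a soft-compactness consequence of the a~priori bounds in Lemma~\ref{lem:entropy}, extracting a weak-$\ast$ limit and then using the two dissipation estimates in~\eqref{bound:orthogonal} to remove the $v$- and $a$-dependence. First I would obtain the limit itself: the pointwise bound $0 \le q_\Eps \le B Q_0$ in~\eqref{L2-infty} gives $0 \le q_\Eps/Q_0 \le B$, so $\{q_\Eps/Q_0\}$ is a bounded family in $L^\infty(\R^+\times\Rd\times[0,1]\times\VV)$. Since $L^\infty$ is the dual of $L^1$, the Banach--Alaoglu theorem produces a subsequence (not relabeled) and a limit $g$ with $0\le g\le B$ such that $q_\Eps/Q_0 \rightharpoonup^\ast g$. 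It then remains to show that $g$ is independent of $v$ and of $a$, to set $\rho:=g$, and to verify its integrability.

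For the $v$-independence I would rewrite the second estimate in~\eqref{bound:orthogonal} as $\int \Lambda(a)Q_0(a)\big(q_\Eps/Q_0-\vint{q_\Eps/Q_0}\big)^2 \le B\,\Eps^{1+\mu}$, whose right-hand side vanishes as $\Eps\to0$. On any slab $\{a\ge\delta\}$ with $\delta\in(0,1)$ the weight $\Lambda(a)Q_0(a)$ is continuous and strictly positive by~\eqref{def:q0} and~\eqref{def:Lambda}, hence bounded below by some $c(\delta)>0$; consequently $q_\Eps/Q_0-\vint{q_\Eps/Q_0}\to0$ strongly in $L^2(\{a\ge\delta\})$. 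Testing the weak-$\ast$ convergence against $\phi\in C_c^\infty$ supported in $\{a\ge\delta\}$ and comparing with this strong limit forces $g=\vint{g}$ a.e.\ on $\{a\ge\delta\}$; letting $\delta\to0$ yields that $g$ is independent of $v$. The $a$-independence is parallel: since $s<1+\mu$, the first estimate in~\eqref{bound:orthogonal} shows $\int D(a)Q_0(a)\big(\del_a(q_\Eps/Q_0)\big)^2\le B\,\Eps^{1+\mu-s}\to0$, and as $D(a)Q_0(a)$ is again bounded below on $\{a\ge\delta\}$, one gets $\del_a(q_\Eps/Q_0)\to0$ strongly in $L^2(\{a\ge\delta\})$. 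Integrating by parts against a test function supported in $\{a\ge\delta\}$ and passing to the limit gives $\del_a g=0$ distributionally on $\{a>0\}$. Combining the two, $g$ depends only on $(t,x)$; I set $\rho(t,x):=g$, so $0\le\rho\le B$.

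It remains to record the integrability. The bound $0\le\rho\le B$ already gives $\rho\in L^\infty(\R^+;L^\infty(\Rd))$. For the $L^1$ control I would use conservation of mass: integrating~\eqref{eq:scaled-kinetic-a} in $(x,v,a)$, the right-hand side vanishes because $\int_\VV(\vint{q_\Eps}-q_\Eps)\dv=0$, while the transport and $\del_a$ terms integrate to zero as exact derivatives on $\Rd$ and on $(0,1)$; hence $\int q_\Eps(t)\le\int q^{in}\le B$ uniformly in $t$ and $\Eps$ by~\eqref{ID}. Testing the weak-$\ast$ convergence against $\chi(t)\psi(x)$ with $0\le\psi\le1$ compactly supported, and using $\int_\VV\dv=\int_0^1 Q_0\,\da=1$, gives $\int\chi(t)\int_{\Rd}\rho(t,x)\psi(x)\dx\dt\le B\int\chi$; taking the supremum over such $\psi$ yields $\|\rho(t)\|_{L^1(\Rd)}\le B$ for a.e.\ $t$, i.e.\ $\rho\in L^\infty(\R^+;L^1(\Rd))$.

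The main obstacle is the degeneracy at $a=0$: there $\Lambda$ and $D$ vanish and $Q_0$ may blow up, so the dissipation estimates carry weights that are not bounded below near $a=0$ and the oscillations of $q_\Eps/Q_0$ can be controlled only on slabs $\{a\ge\delta\}$. The care needed is to verify that sending $\delta\to0$ (using that $\{a=0\}$ is Lebesgue-null) legitimately upgrades $v$- and $a$-independence to the full domain, and to reconcile the weak-$\ast$ $L^\infty$ limit with the strong $L^2$ limits; a secondary subtlety is that the unbounded spatial domain forces the use of the mass bound, rather than weak-$\ast$ convergence alone, to pin down the $L^1$ norm of $\rho$.
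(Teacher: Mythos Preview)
Your argument is correct and is precisely the kind of reasoning the paper has in mind: in the paper this lemma is stated without proof, simply as ``the first and immediate consequence of Lemma~\ref{lem:entropy}'', and your use of the $L^\infty$ bound for Banach--Alaoglu compactness together with the two dissipation estimates in~\eqref{bound:orthogonal} to kill the $v$- and $a$-dependence on slabs $\{a\ge\delta\}$ is exactly how that consequence is cashed out. Your handling of the degeneracy at $a=0$ via the exhaustion $\delta\to0$ and of the $L^1_x$ bound via the mass conservation from~\eqref{ID} is also appropriate and matches what is implicitly used later in the paper.
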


\subsubsection{A priori bounds}
The following a priori bound is similar to Lemma 2.3 in~\cite{PST}. However, since the singularity now appears at $a=0$, for the convenience of the reader we show the full proof of the lemma.
\begin{lem} \label{lem:a-priori}
Suppose $q_\Eps$ satisfies  the a priori bound~\eqref{bound:orthogonal}. 
Then there exists a constant $C > 0$ independent of $t, x, a$ and $\Eps$ such that
\begin{align} \label{bound:deviation}
 \abs{ \f{q_\Eps }{Q_0} (t, x, v, a) - R_\Eps (t, x,v)}
\leq C H^{1/2}(t, x,v)  \,, \qquad \forall a \in [0,1],\, v\in\VV, 
\end{align}
where $R_\Eps$ is defined in~\eqref{def:rho-R-Eps} and
\begin{align} \label{def:H}
H(t, x,v) = \int_0^1 Q_0(a')D(a')\vpran{\p_{a'}\Big(\f{q_\Eps(t, x, v, a')}{Q_0(a')}\Big)}^2\da'  \,.
\end{align}
\end{lem}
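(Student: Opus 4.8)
The plan is to pass to the rescaled unknown $g_\Eps := q_\Eps/Q_0$ and to use the weighted Dirichlet energy $H(t,x,v)$ to control the oscillation of $g_\Eps$ in the variable $a$. First I would observe that, because $\int_0^1 Q_0\,da = 1$, the function $R_\Eps(t,x,v) = \int_0^1 q_\Eps\,da = \int_0^1 g_\Eps(t,x,v,a')\,Q_0(a')\,da'$ is exactly the $Q_0$-weighted average of $g_\Eps$ over $a\in[0,1]$. Consequently, for every $a\in[0,1]$,
\begin{align*}
\abs{g_\Eps(t,x,v,a) - R_\Eps(t,x,v)}
 &= \abs{\int_0^1 \bigl(g_\Eps(t,x,v,a) - g_\Eps(t,x,v,a')\bigr)\,Q_0(a')\,da'} \\
 &\leq \int_0^1 \Bigl(\int_0^1 \abs{\p_{a''} g_\Eps}\,da''\Bigr) Q_0(a')\,da'
 = \int_0^1 \abs{\p_{a''} g_\Eps(t,x,v,a'')}\,da'',
\end{align*}
so the whole proof reduces to bounding $\int_0^1 \abs{\p_a g_\Eps}\,da$ by $C\,H^{1/2}(t,x,v)$.

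For that I would apply the Cauchy--Schwarz inequality with the weight $Q_0 D$:
\begin{align*}
\int_0^1 \abs{\p_a g_\Eps}\,da
 \leq \Bigl(\int_0^1 \frac{da}{Q_0(a)D(a)}\Bigr)^{1/2}
       \Bigl(\int_0^1 Q_0(a)D(a)\,\bigl(\p_a g_\Eps\bigr)^2\,da\Bigr)^{1/2}
 = C_0^{1/2}\,H^{1/2}(t,x,v),
\end{align*}
where $C_0 := \int_0^1 \frac{da}{Q_0(a)D(a)}$. The crucial point is that $C_0<\infty$, and this is precisely where the parameter conditions in \eqref{assump:main} are used: by \eqref{def:q0} and \eqref{def:D} one has $Q_0(a)D(a)\sim c_q c_d\,a^{\theta+n}$ as $a\to 0^+$, so $1/(Q_0 D)$ is integrable near $0$ exactly when $\theta+n<1$, i.e. $\theta<1-n$; the condition $\theta'<1-n'$ handles the other endpoint in the same way, while on $[a_1,a_2]$ both $Q_0$ and $D$ are bounded below by positive constants. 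The finiteness of $C_0$ also shows that, for a.e. $(t,x)$ and every $v$ with $H(t,x,v)<\infty$, one has $\p_a g_\Eps(t,x,v,\cdot)\in L^1(0,1)$; hence $g_\Eps(t,x,v,\cdot)$ is absolutely continuous on the closed interval $[0,1]$ and the applications of the fundamental theorem of calculus above are legitimate all the way up to the degenerate endpoint $a=0$. On the remaining null set where $H=\infty$ the inequality \eqref{bound:deviation} is trivial.

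Combining the two displays and using $\int_0^1 Q_0\,da = 1$ gives $\abs{g_\Eps - R_\Eps}\leq C_0^{1/2}H^{1/2}$ with $C:=C_0^{1/2}$ independent of $t$, $x$, $v$ and $\Eps$, which is \eqref{bound:deviation}. (Equivalently one may note that $R_\Eps$ is a weighted average of the continuous function $g_\Eps(t,x,v,\cdot)$, so $R_\Eps = g_\Eps(t,x,v,a^\ast)$ for some $a^\ast = a^\ast(t,x,v)\in[0,1]$ by the intermediate value theorem, and then estimate $\abs{g_\Eps(a)-R_\Eps} = \abs{\int_{a^\ast}^{a}\p_{a'} g_\Eps\,da'}\leq \int_0^1\abs{\p_a g_\Eps}\,da$.) The only genuine obstacle is the one isolated above: verifying the integrability of $1/(Q_0 D)$ at the degenerate endpoint $a=0$ (and at $a=1$) and the resulting continuity of $g_\Eps$ up to that endpoint. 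This is exactly the reason for the restrictions $\theta<1-n$ and $\theta'<1-n'$, and it is what makes the weighted a priori energy bound \eqref{bound:orthogonal}, which a priori lives only in the weighted space, strong enough to yield uniform pointwise-in-$a$ control. Everything else is Cauchy--Schwarz and the fundamental theorem of calculus.
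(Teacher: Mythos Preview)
Your proof is correct and follows essentially the same approach as the paper: write $R_\Eps$ as the $Q_0$-weighted average of $g_\Eps = q_\Eps/Q_0$, use the fundamental theorem of calculus, and apply Cauchy--Schwarz with the weight $Q_0 D$, reducing everything to the finiteness of $\int_0^1 (Q_0 D)^{-1}\,da$ guaranteed by $\theta+n<1$ and $\theta'+n'<1$. The only cosmetic difference is that the paper applies Cauchy--Schwarz on each subinterval $[a',a]$ before enlarging to $[0,1]$, whereas you bound $|g_\Eps(a)-g_\Eps(a')|\le \int_0^1|\partial_a g_\Eps|\,da$ first; both routes yield the same constant $C=(\int_0^1 (Q_0 D)^{-1}\,da)^{1/2}$.
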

\begin{proof}
By the a priori bound~\eqref{bound:orthogonal}, it holds that
\begin{align*}
 \abs{ \f{q_\Eps}{Q_0} - R_\Eps}
&=  \abs{\f{q_\Eps(a)}{Q_0(a)}- \int\f{q_\Eps(a')}{Q_0(a')} Q_0(a')\da'}
\leq 
     \int_0^1 \abs{\f{q_\Eps(a)}{Q_0(a)}-\f{q_\Eps(a')}{Q_0(a')} }Q_0(a')\da'
\\
&\leq  \int_0^1 \vpran{\int_{a'}^a \abs{\p_{z} \vpran{\f{q_\Eps(z)}{Q_0(z)}}}\dz} Q_0(a')\da'
\\
 &\leq 
       \int_0^1 \vpran{\int_{a'}^a Q_0(z)D(z)\vpran{\p_{z}
                              \vpran{\f{q_\Eps(z)}{Q_0(z)}}}^2\dz}^{1/2}
                    \vpran{\abs{\int_{a'}^a\f{1}{Q_0(z)D(z)}\dz}}^{1/2} Q_0(a') \da'
\\
&\leq 
     \vpran{ \int_0^1 Q_0(a') \vpran{\abs{\int^1_0 \frac{1}{Q_0(z)D(z)}\dz}}^{1/2}\da'}   H^{1/2}(t, x,v) \,.
\end{align*}
Since $\theta + n < 1$ and $\theta' + n' < 1$, we have
\begin{align*}
   \int^1_0 \frac{1}{Q_0(z)D(z)}\dz < \infty \,.
\end{align*}
Hence~\eqref{bound:deviation} holds with $C = \vpran{\abs{\int^1_0 \frac{1}{Q_0(z)D(z)}\dz}}^{1/2}$.
\end{proof}

Denote the Fourier transform in $x$ of $q$ as $\hat q$, which is defined by
\begin{align*}
 \hat q (t, \xi, v, a) =\int_{\R^d} q(t,x,v,a)e^{ix.\xi} dx.
\end{align*}
Then by the using Parseval identity, the Fourier version of the above apriori estimates are 
\begin{lem} \label{lem:Fourier-bound}
Let $q_\Eps$ be the solution to~\eqref{eq:scaled-kinetic-a}. Then
\begin{align} \label{bound:fourierL2v}
  \int_{\R^d} \int_0^1 \int_\VV
     \frac{q_\Eps^2}{Q_0} (t, x, v, a) \dv \da \dx \leq B,  
\qquad
 \int_0^\infty \int_{\R^d} \int_0^1 \int_\VV 
       \Lambda(a) \frac{\abs{\hat q_\Eps - \vint{\hat q_\Eps}}^2}{Q_0} \dv\da\dxi\dt
\leq 
  B  \Eps^{1+\mu}  \,.
\end{align}
Moreover, if we denote 
\begin{align} \label{def:K}
K(t, \xi,v) = \int_0^1 Q_0(a)D(a)\abs{\p_{a}\Big(\f{\hat q_\Eps(t, \xi, a,v)}{Q_0(a)}\Big)}^2 \da  \,,
\end{align}
then
\begin{align} \label{est:Kfourier}
\int_0^\infty \int_\VV \int_{\R^d}  K(t, \xi ,v) \dxi \dv   \dt = \int_0^\infty \int_\VV \int_{\R^d}  H(t, x,v) \dx \dv   \dt  \leq  B  \Eps^{1+\mu-s}
\end{align}
and
\begin{align} \label{bound:fouriery}
\abs{ \f{\hat q_\Eps }{Q_0} (t, \xi, v, a) - \hat R_\Eps (t, \xi,v)}
\leq C K^{1/2}(t, \xi,v)  \,, \qquad \forall a \in \R,\, v\in\VV, 
\end{align}
\end{lem}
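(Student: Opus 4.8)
The plan is to obtain all four estimates as immediate consequences of the physical-space bounds already established in Lemma~\ref{lem:entropy} and Lemma~\ref{lem:a-priori}, transported to the frequency side by Plancherel's theorem. The key structural observation is that the Fourier transform in $x$ commutes with every operation appearing in those bounds: the velocity average $\vint{\cdot}$, the $a$-derivative $\p_a$, and multiplication by the $x$-independent weights $Q_0(a)$, $D(a)$ and $\Lambda(a)$. Throughout I use the normalization of $\hat q$ (equivalently, the measure $\dxi$) for which Plancherel is an isometry, so that $\int_{\R^d}\abs{g}^2\dx=\int_{\R^d}\abs{\hat g}^2\dxi$ for each fixed $(t,v,a)$; this makes the equality in \eqref{est:Kfourier} literal rather than up to a $(2\pi)^d$ factor.

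The first bound in \eqref{bound:fourierL2v} is nothing but the $L^2$-estimate \eqref{L2-infty} restated, so there is nothing to prove. For the second bound in \eqref{bound:fourierL2v}, I fix $(t,v,a)$ and set $g=q_\Eps-\vint{q_\Eps}$; since the Fourier transform commutes with $\vint{\cdot}$, its transform is $\hat q_\Eps-\vint{\hat q_\Eps}$, and Plancherel gives $\int_{\R^d}\abs{g}^2\dx=\int_{\R^d}\abs{\hat q_\Eps-\vint{\hat q_\Eps}}^2\dxi$. Multiplying by $\Lambda(a)/Q_0(a)$, which is independent of both $x$ and $\xi$, and integrating in $(a,v,t)$ by Fubini converts the second estimate in \eqref{bound:orthogonal} into the claimed Fourier bound. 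The equality and bound in \eqref{est:Kfourier} follow by the same reasoning applied to $g=\p_a(q_\Eps/Q_0)$, whose transform is $\p_a(\hat q_\Eps/Q_0)$ because $Q_0$ is $x$-independent; Plancherel for each $(t,v,a)$ yields $\int K\dxi=\int H\dx$ after weighting by $Q_0(a)D(a)$ and integrating, and the upper bound is the first estimate in \eqref{bound:orthogonal}.

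For the pointwise deviation \eqref{bound:fouriery} I simply repeat the proof of Lemma~\ref{lem:a-priori} verbatim with $q_\Eps$ replaced by $\hat q_\Eps$ and $H$ by $K$, as defined in \eqref{def:K}. That argument is entirely pointwise in the transform variable: writing $\hat q_\Eps/Q_0-\hat R_\Eps$ as an $a'$-average of $\hat q_\Eps(a)/Q_0(a)-\hat q_\Eps(a')/Q_0(a')$, bounding each difference by $\int_{a'}^a\abs{\p_z(\hat q_\Eps/Q_0)}\dz$, and applying Cauchy--Schwarz against the weight $Q_0 D$ produces the factor $K^{1/2}$ with the same finite constant $C=(\abs{\int_0^1 (Q_0(z)D(z))^{-1}\dz})^{1/2}$, finite precisely because $\theta+n<1$ and $\theta'+n'<1$. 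The only adjustment is that $\hat q_\Eps$ is complex-valued, so one works with the modulus $\abs{\cdot}$ and the triangle inequality for complex numbers throughout; since $Q_0$ and $D$ are real and positive this changes nothing.

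There is essentially no hard analytic content here—the substance was already carried out in Lemmas~\ref{lem:entropy} and~\ref{lem:a-priori}. The only points requiring care, which I regard as the main (minor) obstacle, are the bookkeeping of the Fourier normalization so that Plancherel delivers the exact equality claimed in \eqref{est:Kfourier}, and the verification that $q_\Eps(t,\cdot,v,a)\in L^2(\R^d)$ for a.e.\ $(t,v,a)$—guaranteed by \eqref{L2-infty}—so that Plancherel may be applied fiber-wise before integrating in the remaining variables via Fubini.
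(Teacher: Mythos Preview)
Your proposal is correct and follows exactly the approach indicated in the paper, which simply states that the lemma follows ``by using the Parseval identity'' and gives no further proof. Your write-up merely spells out the (routine) details of applying Plancherel fiber-wise and repeating the pointwise argument of Lemma~\ref{lem:a-priori} in the $\xi$-variable.
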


The fractional power will be derived by using the following lemma:
\begin{lem} \label{lem:integral}
Suppose 
\begin{align*}
   0 < \alpha - 1 < 2\beta_1 \,,
\qquad
  0 < \alpha - 1 <  4\beta_2 \,,
\qquad
  \beta_1, \beta_2 > 0 \,.
\end{align*}
Then the following integrals are well-defined and there exists a constant $c_1 > 0$ such that
 \begin{align*}
   \int_{0}^{a_1} \frac{|a|^{-\alpha}}{1 + \vpran{\Eps |\xi \cdot v|  |a|^{-\beta_1}}^2} \da
= c_1 \vpran{\Eps |\xi \cdot v| }^{-\frac{\alpha-1}{\beta_1}} \,,\qquad
\int_{0}^{a_1} \frac{|a|^{-\alpha}}{(1 + \vpran{\Eps |\xi\cdot v|  |a|^{-\beta_2}}^2)^2} \da
= c_2 \vpran{\Eps  |\xi\cdot v| }^{-\frac{\alpha-1}{\beta_2}} \,.
\end{align*}
\end{lem}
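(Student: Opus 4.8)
The plan is to eliminate the $\Eps$-dependence by a scaling substitution that collapses each integral to a fixed, parameter-independent integral over $(0,\infty)$; the claimed powers of $\Eps|\xi\cdot v|$ then emerge as prefactors, and $c_1,c_2$ are read off as the resulting convergent integrals. Throughout, $\Eps|\xi\cdot v|$ is regarded as a fixed positive number (the case $\xi\cdot v = 0$ is excluded, as it must be: the integrand is then $|a|^{-\alpha}$, which is not integrable near $a=0$ for $\alpha>1$), and the asymptotics are as $\Eps|\xi\cdot v|\to 0$, which is the regime used later when $\Eps\to 0$.

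For the first integral I would set $\lambda = (\Eps|\xi\cdot v|)^{1/\beta_1}$ and substitute $a = \lambda w$. Then $\Eps|\xi\cdot v|\,|a|^{-\beta_1} = w^{-\beta_1}$ and $\da = \lambda\,dw$, so the integral equals $\lambda^{1-\alpha}\int_0^{a_1/\lambda} w^{-\alpha}(1+w^{-2\beta_1})^{-1}\,dw$, with $\lambda^{1-\alpha} = (\Eps|\xi\cdot v|)^{-(\alpha-1)/\beta_1}$, the exponent arithmetic being just $(1-\alpha)/\beta_1 = -(\alpha-1)/\beta_1$. The integrand $w^{-\alpha}(1+w^{-2\beta_1})^{-1}$ is $\BigO(w^{2\beta_1-\alpha})$ as $w\to 0^+$ and $\BigO(w^{-\alpha})$ as $w\to\infty$, hence integrable on $(0,\infty)$ precisely because the hypotheses give $2\beta_1-\alpha > -1$ (i.e. $\alpha-1<2\beta_1$) and $\alpha>1$; the same two facts give well-definedness of the original truncated integral for fixed $\Eps|\xi\cdot v|>0$. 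I would then define $c_1 := \int_0^\infty w^{-\alpha}(1+w^{-2\beta_1})^{-1}\,dw\in(0,\infty)$ and control the gap between $\int_0^{a_1/\lambda}$ and $\int_0^\infty$ by $\int_{a_1/\lambda}^\infty w^{-\alpha}\,dw = (\alpha-1)^{-1}(a_1/\lambda)^{1-\alpha}$, which vanishes as $\Eps|\xi\cdot v|\to 0$; thus the stated identity holds to leading order with an $o(1)$ relative error. The second integral is handled by the identical substitution with $\lambda = (\Eps|\xi\cdot v|)^{1/\beta_2}$: it becomes $(\Eps|\xi\cdot v|)^{-(\alpha-1)/\beta_2}\int_0^{a_1/\lambda} w^{-\alpha}(1+w^{-2\beta_2})^{-2}\,dw$, and now the integrand is $\BigO(w^{4\beta_2-\alpha})$ near $0$ and $\BigO(w^{-\alpha})$ near $\infty$, so integrability on $(0,\infty)$ is exactly $\alpha-1<4\beta_2$ together with $\alpha>1$, which defines $c_2$; the tail estimate is as before.

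The only genuine subtlety is the finite upper limit $a_1$: with it in place the asserted equalities are to be read as $\Eps|\xi\cdot v|\to 0$ asymptotics modulo the omitted tail (they would be exact identities if $a_1=\infty$), which is all that is needed in the derivation of the fractional symbol. Everything else is routine bookkeeping — matching the behavior of the rescaled integrands at $0$ and $\infty$ with the two integrability conditions in the hypothesis, and checking that the constants $c_1,c_2$ are indeed independent of $\xi$, $v$ and $\Eps$. I do not anticipate any obstacle beyond stating the truncation caveat cleanly.
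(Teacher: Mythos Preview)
Your approach is essentially the same as the paper's: both extract the power of $\Eps|\xi\cdot v|$ by a scaling change of variable. The paper substitutes $z=\Eps|\xi\cdot v|\,a^{-\beta_j}$ (sending $(0,a_1)$ to $(\Eps|\xi\cdot v|\,a_1^{-\beta_j},\infty)$), whereas you substitute $a=\lambda w$ with $\lambda=(\Eps|\xi\cdot v|)^{1/\beta_j}$; the two are related by $z=w^{-\beta_j}$ and yield the same prefactor and the same integrability check. Your observation that the ``constant'' $c_j$ still carries an $\Eps$-dependent limit of integration, so that the stated equality is really a leading-order asymptotic as $\Eps|\xi\cdot v|\to 0$, is correct and in fact more careful than the paper, which writes the resulting integral as $c_1$ (resp.\ $c_2$) without commenting on the moving endpoint.
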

\begin{proof}
Make a change of variable $z = \Eps |\xi \cdot v|  |a|^{-\beta_1}$ in the first integral and $z = \Eps |\xi \cdot v|  |a|^{-\beta_2}$ in the second one. Then
\begin{align*}
   \int_{0}^{a_1} \frac{|a|^{-\alpha}}{1 + \vpran{\Eps |\xi \cdot v|  |a|^{-\beta_1}}^2} \da
= \frac{1}{\beta_1} \vpran{\Eps |\xi\cdot v| }^{-\frac{\alpha-1}{\beta_1}}
    \int_{\Eps |\xi\cdot v||a_1|^{-\beta_1}}^\infty \frac{z^{\frac{\alpha - 1}{\beta_1} - 1}}{1 + z^2} \dz
= c_1 \vpran{\Eps  |\xi\cdot v| }^{-\frac{\alpha-1}{\beta_1}} \,,
\end{align*}
\begin{align*}
   \int_{0}^{a_1} \frac{|a|^{-\alpha}}{(1 + \vpran{\Eps |\xi\cdot v|  |a|^{-\beta_2}}^2)^2} \da
= \frac{1}{\beta_2} \vpran{\Eps |\xi\cdot v| }^{-\frac{\alpha-1}{\beta_2}}
    \int_{\Eps |\xi\cdot v||a_1|^{-\beta_2}}^\infty \frac{z^{\frac{\alpha - 1}{\beta_2} - 1}}{(1 + z^2)^2} \dz
= c_2 \vpran{\Eps |\xi\cdot v| }^{-\frac{\alpha-1}{\beta_2}} \,,
\end{align*}
where the integrability of the $z$-integral is guaranteed respectively by the condition $0 < \frac{\alpha-1}{\beta_1} < 2$ and $0 < \frac{\alpha-1}{\beta_2} < 1$  , or equivalently, $0 < \alpha - 1 < 2 \beta_1$ and $0 < \alpha - 1 <  4\beta_2$.
\end{proof}

\subsubsection{Weight function}

We will use a weight function  built by duality.  In particular,
let $\chi_0(a)$ be given by 
\begin{align} \label{def:chi-0}
   \chi_0(a) 
   = \int_{0}^a \frac{1}{D(a')Q_0(a') } \da' \,.
\end{align}
Then it is a solution of the dual problem in $a$ because
\begin{align} \label{eq:chi-0}
\del_a\vpran{D(a) Q_0(a) \del_a\chi_0}=0.
\end{align}

Properties of $\chi_0$ follow immediately from the properties of $D, Q_0$ and they are summarized as
\begin{lem} \label{lem:chi-0}
With $Q, \, D$ as in~\eqref{def:q0}, \eqref{def:D} and with the parameter range~\eqref{assump:main}, $\chi_0 \in C[0,1]$ is nonnegative, increasing, bounded and 
\begin{align*}
   \chi_0 = C_0  a^{-\theta-n+1} \quad \text{for $a < a_0$}.
\end{align*}
\end{lem}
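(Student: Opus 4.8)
The plan is to prove Lemma~\ref{lem:chi-0} by directly analyzing the explicit formula~\eqref{def:chi-0} for $\chi_0$, breaking the unit interval into the three regions $a<a_1$, $a\in[a_1,a_2]$, and $a>a_2$ dictated by the piecewise descriptions of $Q_0$ and $D$ in~\eqref{def:q0} and~\eqref{def:D}. The integrand $g(a'):=\bigl(D(a')Q_0(a')\bigr)^{-1}$ is strictly positive on $(0,1]$ since $D>0$ and $Q_0>0$ there, so $\chi_0$ is automatically nonnegative with $\chi_0(0)=0$, and it is \emph{increasing} because $\chi_0'(a)=g(a)>0$. The content of the lemma is therefore really about integrability of $g$ near the two endpoints $a=0$ and $a=1$, which is exactly what gives continuity up to the boundary and boundedness of $\chi_0$ on $[0,1]$.

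First I would establish the local power-law form near $a=0$. For $a<a_1$ we have $Q_0(a)=c_q a^{\theta}$ and $D(a)=c_d a^{n}$, so $g(a')=\frac{1}{c_q c_d}\,a'^{-(\theta+n)}$. Integrating from $0$ to $a$ gives
\begin{align*}
   \chi_0(a) = \int_0^a \frac{1}{c_q c_d}\, a'^{-(\theta+n)}\da'
             = \frac{1}{c_q c_d\,(1-\theta-n)}\, a^{-\theta-n+1},
\end{align*}
which is finite precisely because the parameter range~\eqref{assump:main} forces $\theta+n<1$ (indeed $\theta<1-n$). This identifies the constant $C_0=\bigl(c_q c_d(1-\theta-n)\bigr)^{-1}$ and yields the claimed form $\chi_0=C_0 a^{-\theta-n+1}$ for $a<a_1$; here I note that the statement writes $a_0$ but the relevant threshold is $a_1$, and since $1-\theta-n>0$ the power is positive so $\chi_0\to 0$ as $a\to 0^+$, giving continuity at the left endpoint.

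Next I would handle the remaining two regions to get boundedness on all of $[0,1]$. On the middle region $[a_1,a_2]$ both $D$ and $Q_0$ are $\BigO(1)$ and bounded away from zero, so $g$ is bounded and contributes a finite, continuous increment to the integral. On the right region $a>a_2$ we have $Q_0(a)=c_q' a^{\theta'}$ and $D(a)=c_d' a^{n'}$, so $g(a')=\frac{1}{c_q' c_d'} a'^{-(\theta'+n')}$; near $a=1$ this is a bounded continuous function (no singularity there since we are away from $a=0$), so the integral up to $a=1$ is finite. The condition $\theta'+n'<1$ from~\eqref{assump:main} is not needed for finiteness near $a=1$ but guarantees the same integrand would be integrable were the singularity at the right endpoint; in any case $\chi_0(1)<\infty$, so $\chi_0$ is bounded on $[0,1]$. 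Continuity of $\chi_0$ as an indefinite integral of a function that is locally integrable on $(0,1]$ and integrable up to $0$ follows from the dominated convergence theorem, giving $\chi_0\in C[0,1]$.

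The only genuine subtlety---and the step I would flag as the main obstacle---is the integrability at $a=0$, i.e.\ verifying that the singular integrand $a'^{-(\theta+n)}$ is integrable. Everything hinges on reading off $\theta+n<1$ from the chained inequality $-1<\theta<1-n$ in~\eqref{assump:main}; once that is in hand the computation is routine. I would close by remarking that the monotonicity and the exact power-law behavior near zero are exactly the properties that make $\chi_0$ a useful test weight in the duality argument, since~\eqref{eq:chi-0} shows $D Q_0\,\partial_a\chi_0$ is constant, and the controlled blow-up rate $a^{-\theta-n+1}$ is what will later interact with the degeneracy of $\Lambda$ near $a=0$.
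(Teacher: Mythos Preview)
Your proof is correct and follows exactly the approach the paper intends; indeed, the paper does not give a proof at all but simply states that the properties of $\chi_0$ ``follow immediately from the properties of $D, Q_0$,'' and your argument is precisely the routine verification behind that remark. Your observation that the threshold should read $a_1$ rather than $a_0$, and your comment that the condition $\theta'+n'<1$ is only needed if the integrand is singular at $a=1$ (as the numerical example $Q_0=(a(1-a))^\theta$ suggests, though the stated form $c_q' a^{\theta'}$ does not), are both accurate readings of minor inconsistencies in the paper.
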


\subsection{Proof of Main Theorem} Now we are ready to show the proof of the main theorem. We start with the conservation law obtained via multiplying equation~\eqref{eq:scaled-kinetic-a} by the weight function $\chi_0(a)$ and integrating in $a$ and $v$. Thanks to the fact that $\chi_0$ solves the dual problem in~$y$, we find 
\begin{equation}\label{conservationLaw}
\partial_t \int_0^1 \int_\VV q_\Eps \chi_0 \da \dv +{\rm div}_x 
J_\Eps =0
\quad \text{with} \quad
J_\Eps=\f{1}{\Eps^{\mu}}  \int_0^1 \int_\VV v q_\Eps \chi_0 \da \dv .
\end{equation}
The limiting equation will be derived by the convergence in the distributional sense of the equation in~\eqref{conservationLaw}. 
By Lemma~\ref{lem:wcv}, the weak limit of the first term is 
\begin{align} \label{def:B-0}
 \int_0^1 \int_\VV q_\Eps \chi_0 \da \dv \to \int_0^1 \int_\VV \rho(t,x) Q_0(a) \chi_0 \da \dv = B_0 \rho(t,x), \qquad B_0  = \int_0^1 Q_0(a) \chi_0 \da .
\end{align}
It remains to identify the limit of the flux $J_\Eps$. Notice that the apriori estimates in Section~\ref{subsec:apriori} do not provide any direct $L^p$ bound on $J_\Eps$. To better understand the structure of $J_\Eps$ we resort to the Fourier method. Our eventual goal is to prove that, for the constant $\nu_0$ defined in~\eqref{def:nu-0}, as $\Eps \to 0$, 
\begin{equation}
   \hat{{\rm div}_ x J_\Eps} \to  \nu_0 |\xi |^{\f{2-n}{\beta} + 1}   \hat \rho \,,  
\qquad
\text{in the sense of distributions (or in $\CalD'(\R^+ \times \R^d)$)},
\label{est:RestL2}
\end{equation}
which will conclude the proof of the main theorem.

Apply the Fourier transform in $x$ to \eqref{eq:scaled-kinetic-a}, and denote by $\xi$ the Fourier variable. We obtain
 \begin{align*}
   \Eps^{1+\mu} \del_t \hat q_\Eps
+ i\Eps\xi \cdot v \,  \hat q_\Eps
 -\Eps^s \del_a\vpran{D(a)Q_0(a)\del_a \f{\hat q_\epsilon}{Q_0(a)}}
= \Lambda(a) (\vint{\hat q_\Eps} - \hat q_\Eps).
\end{align*}
Rearranging terms, we get
\begin{align}\label{eq:hatqeps}
   \hat q_\Eps - \vint{\hat q_\Eps}
=  - \frac{i \Eps \xi \cdot v}{i\epsilon\xi \cdot v+\Lambda} \vint{\hat q_\Eps} 
   + &\frac{\Eps^s}{i\epsilon\xi \cdot v+\Lambda}\del_a\vpran{D(a)Q_0(a)\del_a \f{\hat q_\epsilon}{Q_0(a)}}
    -\Eps^{1+\mu}\frac{1}{i\epsilon\xi \cdot v+\Lambda} \del_t \hat q_\Eps  \,.
\end{align}
By symmetry and~\eqref{eq:hatqeps}, the Fourier form of the flux term ${\rm div}_x J_\Eps$ can be written accordingly as
\begin{align}
  \hat{{\rm div}_x J_\Eps}(t,\xi) 
  = \f{1}{\Eps^{\mu}}\int_0^1 \int_\VV \vpran{ i \xi \cdot v}  \chi_0 \vpran{\hat q_\Eps-  \vint{\hat q_\Eps}} \da \dv
  =  i\xi\cdot\hat J_\Eps^1 +   \hat J_\Eps^2 + \partial_t    \hat J_\Eps^3.
\label{eq:Jdecomp}
\end{align}
We show in the following that  $\hat J_\Eps^2, \hat J_\Eps^3$ vanish as $\Eps \to 0$ and the fractional Laplacian arises from the $\hat J_\Eps^1$-term.

First we treat the $\hat J_\Eps^1$-term. Separate the imaginary and real part such that
\begin{align*}
    \frac{i \Eps \xi \cdot v}{i\epsilon\xi \cdot v+\Lambda}
 = \frac{\vpran{\Eps \xi \cdot v}^2}{\vpran{\Eps \xi \cdot v}^2 + \Lambda^2}
    + \frac{\vpran{i \Eps \xi \cdot v} \Lambda}{\vpran{\Eps \xi \cdot v}^2 + \Lambda^2} \,.
\end{align*}
Using the symmetry of $\VV$, one can see that contribution from the real part above vanishes and we have
\begin{align*}
 \hat J_\Eps^1 (t,\xi) 
 = \f{-1}{\Eps^{\mu}} \int_\VV \int_0^1  v \chi_0 \frac{i \Eps \xi \cdot v}{i\epsilon\xi \cdot v+\Lambda} \vint{\hat q_\Eps} \da \dv  
 = \f{-i}{\Eps^{\mu}} \int_\VV \int_0^1  v \chi_0 \frac{\Lambda \Eps \xi \cdot v}{(\epsilon\xi \cdot v)^2+\Lambda^2} \vint{\hat q_\Eps} \da \dv.
\end{align*}
Therefore we may write 
\begin{align} \label{def:hat-J-Eps-1}
 i \xi \cdot \hat J_\Eps^1 
 = \hat \rho_\Eps \f{1}{\Eps^{\mu}} \int_\VV \int_0^1  \chi_0 \frac{\Lambda \Eps \vpran{\xi \cdot v}^2}{(\epsilon \xi \cdot v)^2+\Lambda^2} Q_0(a) \da \dv 
 + i \xi \cdot \hat {RJ_\Eps^1} \,,
\end{align}
where the remainder term is
\begin{align} \label{def:RJ-Eps}
\hat {RJ_\Eps^1}  = \f{-i}{\Eps^{\mu}}  \int _\VV  \int_0^1 v \chi_0  \frac{\Lambda \Eps \xi \cdot v}{(\epsilon \xi \cdot v)^2+\Lambda^2} Q_0(a) \vpran{\f{ \vint{ \hat q_\Eps }}{ Q_0(a)}-  \hat  \rho_\Eps} \da \dv .
\end{align}

In order to derive the limit for the first term on the right-hand side of~\eqref{def:hat-J-Eps-1}, we divide the integration domain for $a$ into two parts: $a \geq a_1$ and $a < a_1$. Note that by the definition of $\Lambda$, we have $\Lambda \geq \lambda_0$ for $a \geq a_1$. Hence, the integral term in~\eqref{def:hat-J-Eps-1} over $a \geq a_1$ satisfies
\begin{align*}
    \abs{\f{-i}{\Eps^{\mu}} \int_\VV \int_{a > a_1}  v \chi_0 \frac{\Lambda \Eps \xi \cdot v}{(\epsilon \xi \cdot v)^2+\Lambda^2} Q_0(a) \da \dv}
\leq
   C \Eps^{1-\mu} |\xi| \int_0^1 \chi_0 Q_0 \da 
\leq 
   C \Eps^{1-\mu} |\xi| \,,
\end{align*}
As a consequence, 
\begin{align} \label{limit:J-Eps-1-1}
    \hat \rho_\Eps \f{-i}{\Eps^{\mu}} \int_\VV \int_{a > a_1}  v \chi_0 \frac{\Lambda \Eps \xi \cdot v}{(\epsilon \xi \cdot v)^2+\Lambda^2} Q_0(a) \da \dv
\to 0 
\qquad
   \text{in $\mathcal{D}'(\R^+ \times \R^d)$} \,.
\end{align}
The nontrivial contribution of the integral comes from the part $a \sim 0$ where $\Lambda(a)$ vanishes. By Lemma \ref{lem:integral}, the limit of this part is
\begin{align}
\hat{\rho} \lim_{\Eps \to 0} \f{1}{\Eps^{\mu}}   \int_\VV  \int_{a \leq a_1} \xi\cdot v \chi_0 \frac{a^{\beta +\theta+1}\Eps \xi \cdot v}{(\epsilon\xi \cdot v )^2+a^{2\beta}} 
&=\hat{\rho} \lim_{\Eps \to 0} \f{1}{\Eps^{\mu}}   \int_\VV  \int_{0}^{a_0} \xi\cdot v \frac{a^{-(\beta+n-1)}\Eps \xi \cdot v}{(\epsilon\xi \cdot va^{-\beta})^2+1}\da  \nn
\\
&= \hat\rho\f{1}{\Eps^{\mu}}    \int_\VV   c_1 |v_1| |\xi|(  |v_1| \Eps |\xi |)^\f{2-n}{\beta} 
=  \nu_0 |\xi|^{1+\mu} \hat \rho \,,
\label{calcul_frac}
\end{align}
where $v = (v_1, \cdots, v_d)$. The diffusion coefficient $\nu_0$ is given by 
\begin{align} \label{def:nu-0}
  \nu_0 = \int_\VV c_1 |v_1|^{1+\frac{2-n}{\beta}} \dv
\end{align}
with $c_1$ defined in Lemma~\ref{lem:integral}. This calculation gives the desired scale $\mu = \f{2-n}{\beta}$ and the fractional derivative in \eqref{eq:fractional}.

Next we prove that the remainder term $\hat{RJ_\Eps^1} $ defined in~\eqref{def:RJ-Eps} vanishes.  
Again we treat the two parts $a > a_1$ and $a < a_1$ separately. For $a > a_1$, using the $L^2$ bound in Lemma~\ref{lem:Fourier-bound} together with a similar estimate for deriving~\eqref{limit:J-Eps-1-1}, one can show that the part where $a > a_1$ vanishes. Therefore we may again only consider the tail $a<a_1$. This part can be controlled by using Lemma~\ref{lem:Fourier-bound}, which gives
\begin{align*}
& \f{1}{\Eps^{\mu}} \vpran{\int^{a_1}_0 \int_\VV |v|  \chi_0  \frac{a^{-\beta} \Eps |\xi \cdot v|}{(\epsilon \xi \cdot v a^{-\beta})^2+1} Q_0(a)\da \dv}
\; \vpran{\sup_a \abs{\f{ \vint{ \hat q_\Eps (t,\xi, a) }}{ Q_0(a)}-  \hat  \rho_\Eps (t,\xi)}}
 \\
&
=C\f{1}{\Eps^{\mu}} \vpran{\int^{a_1}_0 \int_\VV |v| \frac{a^{-(n-1+\beta)} \Eps |\xi \cdot v|}{(\epsilon \xi \cdot v a^{-\beta})^2+1} \da \dv}
\; \vpran{\sup_a \abs{\f{ \vint{ \hat q_\Eps (t,\xi, a) }}{ Q_0(a)}-  \hat  \rho_\Eps (t,\xi)}}
\\
&
= C \vpran{\int_\VV |v| |\xi \cdot v |^\f{2-n}{\beta}  \dv} \sup_a \abs{ \int_\VV \f{\hat q_\Eps(t,\xi, a ,v) }{ Q_0(a)} \dv-  \int_\VV \hat  R_\Eps (t,\xi,v) \dv}
 \\
& \leq C |\xi |^\f{2-n}{\beta}  \int_\VV  \sup_a \abs{ \f{\hat q_\Eps }{ Q_0(a) }-  \hat  R_\Eps } \dv
\leq C |\xi |^\f{2-n}{\beta} \int_\VV K^{1/2}(t,\xi, v) \dv
\leq C |\xi |^\f{2-n}{\beta} \vpran{\int_\VV K(t,\xi, v) \dv}^{1/2} \,.
\end{align*}
By the assumption in~\eqref{assump:main}-\eqref{value_mu} that $1+\mu > s$, the following limit holds: 
\begin{align} \label{limit:RJ-Eps-1}
   i\xi\cdot\hat {RJ_\Eps^1} \to 0 
\qquad
   \text{in} \,\, \CalD'(\R^+ \times \R^d). 
\end{align}
Combining~\eqref{limit:RJ-Eps-1} with~\eqref{calcul_frac}, we obtain that
\begin{align*}
  \hat  J^1_\Eps(t,\xi) \to \nu_0 |\xi |^{\f{2-n}{\beta} + 1}   \hat \rho 
\qquad
   \text{in $\mathcal{D}'(\R^+ \times \R^d)$} \,.
\end{align*}

Next, we show that $\hat J_\Eps^2$ in~\eqref{eq:Jdecomp} vanishes as $\Eps \to 0$. After integrating by parts,
the term $\hat J_\Eps^2$ satisfies 
\begin{align*}
\hat J_\Eps^2 & =  \Eps^{s-\mu} \int_\VV\int_0^1\frac{\vpran{i \xi \cdot v}\chi_0}{i\epsilon\xi \cdot v+\Lambda}
     \del_a\vpran{D(a)Q_0(a)\del_a \f{\hat q_\epsilon}{Q_0(a)}}\da\dv
\\
     &=   -\Eps^{s-\mu} \int_\VV\int_0^1 \left[\frac{\vpran{i \xi \cdot v} \del_a \chi_0}{i\epsilon\xi \cdot v+\Lambda} - \frac{\vpran{i \xi \cdot v}\chi_0  \del_a \Lambda }{(i\epsilon\xi \cdot v+\Lambda)^2} \right]
    D(a)Q_0(a)\del_a \f{\hat q_\epsilon}{Q_0(a)}\da\dv
\end{align*}
By the definition of $K$ in~\eqref{def:K} and the Cauchy-Schwarz inequality, we can bound $\hat J_\Eps^2$ as
\begin{align*}
 |{\hat J_\Eps^2} |^2
&\leq C \Eps^{2(s-\mu) }  \int_0^1 \int_\VV  D(a) Q_0(a)
 \left[ \frac{ |\xi \cdot v|^2 (\del_a \chi_0)^2}{| \epsilon\xi \cdot v|^2 +\Lambda^2} 
 + \frac{|\xi \cdot v|^2 \chi_0^2  (\del_a \Lambda)^2 }{((\epsilon\xi \cdot v)^2+\Lambda^2 )^2 } \right] \dv\da   \; \int_{\VV} K(t,\xi,v) \dv  
\\ 
&\Denote C \Eps^{2(s-\mu) }  \left[ G^1(t,\xi)+G^2(t,\xi) \right] \;  \int_{\VV} K(t,\xi,v) \dv .
\end{align*}
To bound the term $G^1$, we use the definitions of $\chi_0$ in \eqref{def:chi-0} and obtain
\begin{align*}
G^1(t,\xi) = \int_0^1 \int_\VV \frac{1}{D(a) Q_0(a)} \frac{ |\xi \cdot v|^2 }{ | \epsilon\xi \cdot v|^2+ \Lambda^{2}}\dv\da .
\end{align*}
Since $\frac{1}{D(a) Q_0(a)}$ is integrable on $[0, 1]$, the part $a>a_1$ contribute to a small term and we focus on the region where $a<a_1$. The corresponding contribution to $G^1$ is bounded by Lemma~\ref{lem:integral},
\begin{align*}
c\int_0^1 \int_\VV    \frac{ a^{-(2\beta+n+\theta)} |\xi \cdot v|^2 }{1+ (| \epsilon \xi \cdot v| a^{-\beta})^2}\dv\da = c \int_\VV  | \epsilon \xi \cdot v|^{\frac{1-n-\theta -2 \beta}{\beta}} |\xi \cdot v|^2  \dv.
\end{align*}
Since $\theta+n<1$, the integral in $v$ converges. Taking into account~\eqref{est:Kfourier} and~\eqref{assump:main}, the resulting power in $\Eps$ is
\begin{align*}
2(s-\mu)\;  +\frac{1-n-\theta -2 \beta}{\beta} + 1\; +\mu-s = s-\frac{\theta + 1}{\beta} -1 >0 \,.
\end{align*}
Hence the contribution of the $G^1$-term to $\hat{J_\Eps^2}$ vanishes in $L^2(\R^d)$.

The term with $G^2$ is treated similarly. 
For $a> a_1$, we use the condition for $\Lambda$ in \eqref{def:Lambda} and obtain an upper bound as
\begin{align*}
   C \int_{a> a_1} \int_\VV  D(a) Q_0(a) \ \dv\da  
< \infty
\end{align*}
Therefore the contribution to $G^2$ from the part $a > a_1$ vanishes. 

The contribution to $G^2$ for $a< a_1$ is estimated by the change of variables as follows:
\begin{align*}
 \int_{a<a_1} \int_\VV  D(a) Q_0(a) \chi_0^2  \frac{ (\del_a \Lambda)^2 |\xi \cdot v|^2 }{((\epsilon\xi \cdot v)^2+\Lambda^2 )^2 } \dv\da & \leq C
  \int_{a<a_1} \int_\VV   \frac{a ^{-(2\beta+n+\theta)}  |\xi \cdot v|^2 }{(1+(\epsilon\xi \cdot v \,a^{-\beta})^2 )^2 } \dv\da
\\
& \leq 
 C \int_\VV (\epsilon\xi \cdot v)^{\frac{1-n-\theta- 2 \beta }{\beta}} |\xi \cdot v|^2 \dv 
  = C \epsilon^{\frac{1-n-\theta- 2 \beta }{\beta}} |\xi|^{\frac{1-n - \theta}{\beta}},
 \end{align*}
which, by assumption~\eqref{assump:main}, gives the resulting total power of $\Eps$ as
\begin{align*}
 2(s-\mu)\;  +\frac{1-n-\theta - 2 \beta }{\beta} \; + 1+ \mu-s = s- \frac{\theta+1}{\beta} -1 >0
\end{align*}
Overall we have
\begin{align*}
  {\hat J_\Eps^2} \to 0 
\qquad
  \text{in} \,\, L^2(\R^+ \times \R^d) \,.
\end{align*}

Finally, we show that $\hat J_\Eps^3$ vanishes as $\Eps \to 0$. Recall the definition of $\hat J_\Eps^3$:
\begin{align*}
\hat J_\Eps^3 (t, \xi)
= -  \Eps \int_\VV\int_0^1\frac{\chi_0}{a(1-a)} \frac{\vpran{i \xi \cdot v}}{i\epsilon\xi \cdot v+\Lambda} \hat q_\Eps \da\dv \,,
\end{align*}
Similarly as before, we separate the integral as
\begin{align*}
- \hat J_\Eps^3 (t, \xi)
= \Eps \int_{\VV} \int_{a>a_1} \chi_0 \frac{\vpran{i \xi \cdot v}}{i\Eps\xi \cdot v+\Lambda} \hat q_\Eps\da\dv
+ \Eps \int_\VV \int_{a<a_1} \chi_0 \frac{\vpran{i \xi \cdot v}}{i\Eps\xi \cdot v+\Lambda} \hat q_\Eps\da\dv \,.
\end{align*}
Using the Cauchy-Schwarz inequality, we can bound the term with the integration over $a>a_1$ by
\begin{align*}
  C \Eps \int_\VV \int_0^1 |\hat q_\Eps|\da\dv 
\leq   
  \Eps \left( \int_\VV \int_0^1 \frac{ | \hat q_\Eps |^2}{Q_0} \da\dv\right)^{1/2} \,.
\end{align*}
By Lemma~\ref{lem:Fourier-bound}, this term is of order $\Eps$ in $L^2(\R^d)$ uniformly in time. 

The second term in $-\hat J_\Eps^3$ is estimated by the change of variables. 
More specifically, we apply the Cauchy-Schwarz inequality and Lemma~\ref{lem:integral} to get
\begin{align*}
 \abs{ \Eps \int_\VV \int_{a<a_1} \chi_0 \frac{\vpran{i \xi \cdot v}}{i\Eps\xi \cdot v+\Lambda} \hat q_\Eps\da\dv}^2
&\leq 
 \Eps^2  \int_\VV \int_{a<a_1}\frac{|\xi \cdot v|^2 \chi_0^2}{(\Eps\xi \cdot v)^2+\Lambda^2} Q_0 \da\dv \vpran{ \int_\VV \int_{\RR}\frac{| \hat q_\Eps|^2}{Q_0} \da\dv}
\\
&\leq
  C \Eps^2  \int_\VV \int_{a<a_1}  \frac{|\xi \cdot v|^2 |a|^{-(2 \beta+2n+\theta-2)}}{1 + (\Eps |\xi \cdot v |\, |a|^{-\beta})^2 } \da dv  \vpran{\int_\VV \int_{\RR}\frac{| \hat q_\Eps|^2}{Q_0} \da\dv}  
\\
&\leq   
  C \Eps^2  \vpran{\int_\VV (\Eps |\xi \cdot v|)^{-\frac{2\beta+2n+\theta-3}{\beta}} |\xi \cdot v|^2  \dv}
  \vpran{\int_\VV \int_{\RR}\frac{| \hat q_\Eps|^2}{Q_0} \da\dv } 
\\
&\leq   
  C (\Eps|\xi|)^{-\frac{2n+\theta-3}{\beta}} 
  \;  \int_\VV \int_{\RR}\frac{| \hat q_\Eps|^2}{Q_0} \da\dv.  
\end{align*}
Here the integrability in $a$ and $v$ are due to the assumptions in~\eqref{assump:main}, which gives 
\begin{align*}
   0<2\beta+ 2n+\theta-3<2\beta, \qquad 2n+\theta-3<0 \,.
\end{align*}
Therefore,  by the $L^2$-bound of $\hat q_\Eps$ in Lemma~\ref{lem:Fourier-bound}, we get
\begin{align*}
     {\hat J_\Eps^3} \to 0 
\qquad
  \text{in} \,\, L^2(\R^+ \times \R^d) \,.
\end{align*}
Combining the estimates for $\hat J_\Eps^1, \hat J_\Eps^2, \hat J_\Eps^3$, we conclude that~\eqref{est:RestL2} holds. 

\bibliographystyle{amsxport}
\bibliography{fractional_ref.bib}

\end{document}